\newtheorem{theorem}{Theorem}
\newtheorem{theoremc}{Theorem}
\newtheorem{rk}[theoremc]{Remark\!\!}
\newtheorem{lem}[theorem]{Lemma}
\renewcommand\1{{\bf 1}}
\newcommand\com[1]{}
\newcommand\Cc{{\let\mathcal\mathscr\mathcal C}}
\newcommand\g{{\frak g}}
\renewcommand\l{\lambda}
\newcommand\La{\Lambda}
\newcommand\op[1]{\mathop{\rm #1}\nolimits}
\newcommand\ot{\otimes}
\newcommand\R{{\mathbb R}}
\newcommand\vp{\varphi}
\newcommand\z{\sigma}
\begin{document}

 \title{Conformal Differential Invariants}
 \author{Boris Kruglikov}
 \date{}
\address{Institute of Mathematics and Statistics, NT-faculty, University of Troms\o, Troms\o\ 90-37, Norway.
\quad E-mail: {\tt boris.kruglikov@uit.no}. }
 \keywords{Differential Invariants, Invariant Derivations, conformal metric structure,
 Hilbert polynomial, Poincar\'e function}

 \vspace{-14.5pt}
 \begin{abstract}
We compute the Hilbert polynomial and the Poincar\'e function counting the number of fixed jet-order
differential invariants of conformal metric structures modulo local diffeomorphisms, and
we describe the field of rational differential invariants separating generic orbits of the
diffeomorphism pseudogroup action. This resolves the local recognition problem for conformal structures.
 \end{abstract}

 \maketitle

 \section*{Introduction}

Differential invariants play a central role in the classification problems of geometric
structures. Often the fundamental invariants have tensorial character, but for resolution of
the equivalence problem scalar invariants are required to be derived from those.

For instance, the fundamental invariant of a Riemannian metric $g$ on a manifold $M$ is the Riemann
 tensor $R_g\in\Gamma(\La^2T^*M\ot\mathfrak{so}(TM))$.
Scalar differential invariants are Weyl curvature invariants \cite{W}, separating generic orbits of the
diffeomorphism pseudogroup $G=\op{Diff}_\text{loc}(M)$ acting on the space of jets of metrics
$J^\infty(S^2_\text{ndg}T^*M)$, where $S^2_\text{ndg}T^*M$ is the complement in $S^2T^*M$ to the cone
of degenerate quadrics, and they are obtained by contractions
of the tensor products of the covariant derivatives of the curvature tensor $R_g$.
Their number depending on the jet-order was computed by Zorawski \cite{Z} and Haskins \cite{Ha},
see also \cite{T}.

In this paper we do the same for conformal metric structures $(M,[g])$ of arbitrary signature
in dimensions $n=\dim M>2$. Notice that for $n=2$ the conformal group is too large and, due to
Gau\ss\ theorem on existence of isothermal coordinates, there are no local invariants of conformal
structures, and hence no differential invariants in 2D.

The fundamental invariants $C$ of the conformal structure are the Cotton tensor for $n=3$ and
the Weyl tensor for $n>3$. Similarly to Weyl scalar invariants for Riemannian metrics, one could expect
scalar invariants to be derived from the fundamental tensor invariants, and this was done in
\cite{FG,BEG,G}, and will be discussed in the next section. These scalar invariants are however
defined on the (proper jet-lift of the) ambient space $\hat M$ to our $M$, $\dim\hat M=n+2$, so
that the constructed scalars are covariants rather than invariants.

There is however an easy approach to construct differential invariants for generic conformal structures.
It is based on the folklore result that in the domain $U\subset M$, where
$\|C\|^2_g\neq0$ for some (and hence any) representative $g\in[g]$, one can uniquely fix
(actually up to $\pm$ if the signature is split) a metric $g_0$ in the conformal class $[g]$
by the normalization $\|C\|^2_{g_0}=\pm1$ (the sign is always $+$ in the Riemannian case, but can be any
in the indefinite case). Then the conformal invariants are derived from the (pseudo-)Riemannian metric ones
(Weyl curvature invariants or those from \cite{K,LY}).

This however does not yield the number\footnote{We have to fix the signature $(p,q)$ of $[g]$,
$p+q=n$. The formulas for invariants vary a bit with this $(p,q)$, but the number of invariants depends
only on $n$.}  of scalar differential invariants $H_n(k)$
depending on the jet-order $k$ (we count so-called "pure order", see below).
The classical approach to computing these numbers is the Lie method of elimination of group parameters
(or algebra parameters), see \cite{Ha,Z,O}. This involves calculation of ranks of large matrices.
Instead we rely on some simple algebraic ideas and compute the Hilbert polynomial $H_n(k)$, the
first values of which are given below:
\medskip
 \begin{center}
\begin{tabular}{c||c|c|c|c|c|c}
$n\ \backslash\ k$ & 1 & 2 & 3 & 4 & \dots & $k$ \\
\hline\hline
3 \vphantom{$\dfrac{o}o$} & 0 & 0 & 1 & 9 & \dots & $k^2-4$ \\
\hline
4 \vphantom{$\dfrac{o}o$} & 0 & 3 & 36 & 91 & \dots & $\tfrac16(k+2)(k+3)(5k-7)$ \\
\hline
5 \vphantom{$\dfrac{o}o$} & 0 & 24 & 135 & 350 & \dots & $\tfrac1{24}(k+2)(k+3)(k+4)(9k-11)$
\end{tabular}
 \end{center}
\bigskip

Then we derive the Poincar\'e function encoding these numbers.
We also indicate a different set of conformal differential invariants, now rational,
and describe the field they generate.

 \section{The algebras and fields of differential invariants}

The scalar conformal invariants mentioned in the introduction are constructed via the ambient metric
construction of Fefferman and Graham \cite{FG} roughly as follows.
Consider the bundle $\bar M=M\times\R_+$ over $M$ consisting of all representatives $g$ of $[g]$
with its natural horizontal metric $\bar g$ (tautological structure: $\bar g_g=g\circ d_g\pi$, where
$\pi:\bar M\to M$), and let $\hat M=\bar M\times(-1,1)$. The ambient
metric $\hat g$ is $\R_+$-scaling weight 2 homogeneous Ricci flat Lorentzian metric on $\hat M$
restricting to $\bar g$ on $\bar M\times\{0\}$. This exists on the infinite jet of
$\bar M\times\{0\}\subset\hat M$ for odd $n$, and up to order $n/2$ for even $n$.
Taking the Weyl metric curvature invariants of $\hat g$ yields scalar invariants of $[g]$,
which give a complete set of polynomial invariants\footnote{When we write "polynomial" here and beyond
we mean only with respect to jets of order $>0$, allowing division by the determinant of $g$ everywhere.}
for odd $n$ and the same to a finite order for even $n$, see \cite{BEG,G}.
The definite advantage of these invariants is that they are defined for all conformal structures.

There are however two basic problems with these ambient Weyl conformal invariants, similar to the classical
Weyl metric curvature invariants. First of all, the algebra generated by these polynomial invariants
is not finitely generated. Secondly, it is not apriori clear which of these differential invariants are
separating for the orbits of the diffeomorphism pseudogroup action (on infinite or any finite jet-level).

The second problem is solved by passing to rational differential invariants: since the action is algebraic,
its prolongations are algebraic too \cite{KL2}, and in any finite jet-order there exists a rational quotient by
the action due to the Rosenlicht theorem \cite{R}. From this viewpoint the field $\mathfrak{F}$ of rational
differential invariants is useful and simpler. The invariants obtained in this way will be presented below.

The first problem is a bit more complicated, as it is clear that the transcendence degree
$\op{trdeg}(\mathfrak{F})=\infty$, so just passing to rational invariants does not resolve infinite generation.
In the early days of differential invariants theory it was suggested and motivated by Sophus Lie and Arthur Tresse
that the algebra of differential invariants is generated by a finite number of differential invariants
$I_1,\dots,I_t$ and a finite number of invariant derivations $\nabla_1,\dots,\nabla_s$. This was later
proved in several versions, see \cite{KL2} and the references therein.

In more details, consider the algebra $\mathfrak{A}_l$ of differential invariants that are rational by the jets
of order $\leq l$ and polynomial by the jets of higher order ($l$ is determined by the structure in question,
we will see that in the case of conformal structures $l=4$ for $n=3$ and $l=3$ for $n>3$).
This $\mathfrak{A}_l$ is called the algebra of rational-polynomial invariants.

The main result of \cite{KL2} states that $\mathfrak{A}_l$ is finitely generated by $I_i,\nabla_j$, i.e.\
any differential invariant from $\mathfrak{A}_l$ is a polynomial of $\nabla_JI_i$ for ordered multi-indices
$J=(j_1,\dots,j_r)$ with rational coefficients of $I_k$.

Now the field of rational differential invariants $\mathfrak{F}$ is generated by $\mathfrak{A}_l$ for some $l$,
and so is also finitely generated in the Lie-Tresse sense as above. The algebra $\mathfrak{A}_l$ separates
the orbits of the $G$-action on the space of jets of conformal structures $J^\infty(\mathcal{C}_M\!)$, where 
 $$\mathcal{C}_M=S^2_\text{ndg}T^*M/\R_+,$$
and we get a finite separating set of invariants $\nabla_JI_i$, $|J|\le k-\op{deg}(I_i)$ for the restriction
of the action on $J^k$. Thus we obtain a set of generators for the field $\mathfrak{F}_k$ of rational
invariants of order $k$ that filter the field $\mathfrak{F}$.

 \section{Scalar invariants of conformal metric structures}\label{S2}

Let us generate conformal differential invariants of generic conformal structures $[g]$. This will in turn
generate rational differential invariants on the space of jets of all conformal structures
$J^\infty(\mathcal{C}_M\!)$.

We begin with the case $n\ge4$. Since there are no metric invariants of order $<2$, there are no conformal
invariants of lower order too. The lowest order conformal invariants live in 2-jets.
It is well-known that the complete invariant there is the conformal Weyl tensor $C$, considered as a $(3,1)$ tensor.
Indeed, the only invariant of the 2-jet of a Riemannian metric $g$ is the Riemann curvature tensor $R_g$
(due to existence of normal geodesic coordinates),
and conformal re-scalings of $g$ leave invariant only the Weyl part of it \cite{B}.

The space of conformal Weyl tensors $\mathcal{W}$ (at one point) has dimension $\dim\mathcal{W}=\tfrac1{12}(n-3)n(n+1)(n+2)$, the conformal linear group $CO(g)$ acts effectively
on $\mathcal{W}$, and so the codimension of a generic orbits equals
 $$
H_n(2)=\dim\mathcal{W}-\dim CO(n)=\tfrac1{12}(n^4-13n^2-12).
 $$
Since $CO(g)=SO(g)\times\R_+$ is a reductive Lie group acting in algebraic manner on $\mathcal{W}$, the Hilbert
invariant theorem \cite{Hi} implies existence of invariants separating generic orbits.

Moreover, $SO(g)$-invariants can be taken to be polynomial, but presence of $\R_+$-factor
compels to extend to rational invariants (in this case, however dividing by the determinant of metric is enough). 
The geometric invariant theory \cite{M} provides a method to construct these, 
and we can choose among them $H_n(2)$ functionally
independent invariants separating orbits on a Zariski open set in $\mathcal{W}$.

Let us indicate first how to construct non-algebraic invariants, that are obtained by a finite
algebraic extension. Consider a generic element $C\in\mathcal{W}$, at this point meaning only
$\|C\|^2_g\neq0$. Fix a metric $g_0\in[g]$ by the condition $\|C\|^2_{g_0}=\pm 1$. Then we can convert $C$ to
a $(2,2)$ tensor, interpreted as a linear map $C:\La^2T\to\La^2T$, where $T=T_aM$ is the tangent space to
$M$ at the considered point $a$. This map is $g$-symmetric, traceless and has unit norm.
Therefore its spectrum gives $d=\binom{n}2-2$ real scalar
invariants $\lambda_1,\dots\lambda_d$ (alternatively, pass to algebraic invariants $\op{Tr}(C^i)$,
$1<i\le d+1$). Notice that the spectrum $\op{Sp}(C)$ is simple for $C$ from a Zariski open set in $\mathcal{W}$.

Let $\sigma_i\in\La^2T$ be the eigenvectors corresponding to $\lambda_i$, normalized by the
condition\footnote{Here for $\sigma_i$ as well as before for $g_0$ a possible freedom of the sign choice is hidden.
This can be locked, but we prefer to ignore it for simplicity of the exposition.} $\|\z_i\|^2_{g_0}=\pm1$
(the sign again can be arbitrary in the case of indefinite signature of $[g]$; our genericity assumption
implicity implies that $\|\z_i\|^2_{g_0}\neq0$, so $\z_i$ can be rescaled to get the desired normalization).
Then the operators $A_i=g_0^{-1}\z_i$ carry a lot of invariants, for instance $\op{Tr}(A^\z)$ for
$A^\z=A_1^{k_1}\cdots A_d^{k_d}$ when $\z=(k_1,\dots,k_d)$,
and we can extract $H_n(2)$ independent among them.

Let us also notice that the normalized eigenbases of one these ope\-ra\-tors $A^\z$ give us a canonical
frame $e_1,\dots,e_n$ (provided that one of them has simple spectrum - this is yet another requirement for $C$
being generic). This frame depends on the 2-jet of a conformal structure.

 \begin{rk}
The skew-symmetric operators $A_i$ have purely imaginary spectrum, so one has to consider their
products, or work in complexification and take the real parts, to derive non-trivial real invariants.
The case $n=4$ is however an exception ($n=3$ is an exception too, to be considered later).

In this case, due to exceptional isomorphism $\mathfrak{so}(4)=\mathfrak{so}(3)\oplus\mathfrak{so}(3)$,
the algebra of operators is the sum of these two algebras, interpreted as the action of unit purely imaginary
quaternions $\mathbb{S}^2\subset\op{Im}\mathbb{H}$ from left and right on $\mathbb{H}$. Denote these operators
by $J^\text{left}_i$ and $J^\text{right}_i$, $i=1,..,3$. They are in quaternionic relations and the left ones
commute with the right ones. The operators $B_i=J^\text{left}_i\cdot J^\text{right}_i$ have
$\op{Sp}(B_i)=\{\pm1,\pm1\}$, and the corresponding eigenspace decomposition is $T=\Pi^-_i\oplus\Pi^+_i$,
$\dim\Pi^\pm_i=2$. The intersections $\Pi^\pm_i\cap\Pi^\pm_j$ yield the splitting of $T$ into direct sum of 4 lines,
whence the frame $e_1,\dots,e_4$ obtained by $g_0$-normalization (a residual finite symmetry related to numeration
of $e_i$ and change of sign remains here, but can be eliminated on further steps).
 \end{rk}

Writing the canonical representative $g_0\in[g]$ in this frame we obtain all other differential
invariants. This can be formulated in the framework of Lie-Tresse theorem, since $\nabla_j=D_{e_j}$
(horizontal lift to $J^\infty$) form the basis of invariant derivations,
and we can choose $I_i$ among the second order invariants already constructed.

Now we achieve algebraicity as follows. The algebraic extension is given by variables $y_1,\dots,y_p$ that
are in algebraic relations with vertical coordinates on the space $J^2(\mathcal{C}_M\!)$.
These enter both $I_i$ and $\nabla_j$, but the derived differential invariants $\nabla_JI_i$
are algebraic by higher order jets. Considering the algebra of invariants generated by these on both second
and higher jets, we can eliminate (for instance, via a Gr\"obner basis) the $y$-variables and get
a system of separating algebraic invariants that generate the fields $\mathfrak{F}_k$ for $k=2$ and $k>2$
by the Rosenlicht theorem. We can also eliminate $y$-parameters in the coefficients of
$\nabla_i=\kappa_i^jD_j$ (by taking linear combinations with invariant coefficients) to have
rational invariant derivations.

It is not difficult to see (it also follows from considerations in the next section) that the denominators
in the rational differential invariants can be chosen supported in the 3rd jets for $n>3$,
and in the 4th jets for $n=3$. Hence we can take $l=3$ in the algebra $\mathfrak{A}_l$ of
rational-polynomial invariants discussed in the previous section for $n>3$, and $l=4$ for $n=3$.
This establishes the following statement for $n\ge4$.

 \begin{theorem}
The algebra $\mathfrak{A}_l$ of rational-polynomial invariants for $l={3+\delta_{n3}}$ as well the field
$\mathfrak{F}$ of rational differential invariants of conformal metric structures
are both generated by a finite number of (the indicated) differential invariants $I_i$
and invariant derivations $\nabla_j$, and the invariants from this
algebra/field separate generic orbits in $J^\infty(\mathcal{C}_M\!)$.
 \end{theorem}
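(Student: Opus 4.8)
The plan is to reduce the assertion to the Lie--Tresse finite generation theorem of \cite{KL2} by exhibiting, for $n\ge4$ (so that $\delta_{n3}=0$ and $l=3$), an explicit finite set of generating invariants $I_i$ and invariant derivations $\nabla_j$ built entirely from the $2$-jet. First I would fix the genericity domain: the condition $\|C\|^2_g\neq0$ cuts out a Zariski-open $G$-invariant subset of $J^2(\mathcal{C}_M\!)$, and on it the normalization $\|C\|^2_{g_0}=\pm1$ singles out a distinguished representative $g_0\in[g]$. Viewing $C$ as a $g_0$-symmetric traceless operator on $\La^2T$ with generically simple spectrum, its eigendata yield the operators $A_i=g_0^{-1}\z_i$ and the scalars $\op{Tr}(A^\z)$, among which I would select $H_n(2)$ functionally independent ones to serve as the $I_i$. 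The eigenframe of a suitable $A^\z$ (for $n=4$ the construction of the Remark via $\mathfrak{so}(4)=\mathfrak{so}(3)\oplus\mathfrak{so}(3)$) produces a canonical coframe $e_1,\dots,e_n$, whose horizontal lifts $\nabla_j=D_{e_j}$ form a basis of invariant derivations.

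With these generators in place, finite generation of $\mathfrak{A}_l$ is exactly the content of the main theorem of \cite{KL2}, whose hypothesis of algebraicity of the prolonged $G$-action is satisfied here; since $\mathfrak{F}$ is generated by $\mathfrak{A}_l$, it inherits finite generation in the same Lie--Tresse sense. For the separation statement I would argue that the canonical frame, together with the components of $g_0$ and of its covariant derivatives expressed in that frame, reconstructs the full infinite jet of $[g]$ at a generic point; hence the values of the $\nabla_JI_i$ determine the orbit, and conversely generic orbits are separated. Combined with the Rosenlicht theorem \cite{R}, which furnishes a rational quotient at each finite order, this shows that the $\nabla_JI_i$ with $|J|\le k-\op{deg}(I_i)$ generate $\mathfrak{F}_k$ and separate generic orbits in every $J^k$, hence in $J^\infty(\mathcal{C}_M\!)$.

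It then remains to upgrade these invariants from merely algebraic to rational and to pin down the value of $l$. Here I would introduce the finite algebraic extension variables $y_1,\dots,y_p$ encoding the spectral and eigenvector data, which satisfy polynomial relations with the vertical coordinates on $J^2$; although $I_i$ and the coefficients $\kappa_i^j$ in $\nabla_i=\kappa_i^jD_j$ involve the $y$'s, the derived quantities $\nabla_JI_i$ are already algebraic in the jets of order $>2$. Eliminating the $y$'s by a Gr\"obner basis computation yields genuinely rational separating invariants and, after passing to invariant linear combinations, rational invariant derivations. Tracking the orders at which denominators appear in this elimination shows they are supported in the $3$rd jets for $n>3$, giving $l=3=3+\delta_{n3}$; the $n=3$ case, requiring the Cotton tensor and the analogous later construction, produces denominators in the $4$th jets and hence $l=4$.

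The hard part will be this last step: controlling the denominators and verifying that the elimination does not force rationality in jets of order higher than claimed. This demands a careful accounting of exactly where the normalizations $\|C\|^2_{g_0}=\pm1$ and $\|\z_i\|^2_{g_0}=\pm1$ and the passage to the canonical frame introduce division, and why the action of the $\nabla_j$ creates no new denominators, so that the genericity conditions (nonvanishing norm, simple spectra) carve out a single $G$-invariant Zariski-open set on which the bound $l=3+\delta_{n3}$ holds uniformly. By contrast, the finite-generation and separation assertions are comparatively soft and follow immediately once the generators above are exhibited.
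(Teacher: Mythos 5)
Your proposal follows essentially the same route as the paper: normalization $\|C\|^2_{g_0}=\pm1$, spectral invariants and eigenframe of the Weyl (resp.\ Cotton--York for $n=3$) operator giving the $I_i$ and $\nabla_j=D_{e_j}$, the Lie--Tresse framework of \cite{KL2}, Rosenlicht's theorem for separation, and Gr\"obner elimination of the auxiliary algebraic variables with denominator control yielding $l=3+\delta_{n3}$. The only differences are cosmetic (e.g.\ you write ``coframe'' where a frame is meant, and you flag the denominator bookkeeping as the hard step where the paper treats it as routine), so this matches the paper's argument.
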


Let us consider now the exceptional case $n=3$ and justify the above theorem in this case.
There are no conformal invariants in 3D of order 2, and all
differential invariants of order 3 are derived from the Cotton tensor $C$ considered as $(3,0)$ tensor.
The space of Cotton tensors has dimension 5, is acted upon effectively by $CO(g)$ of dimension 4,
so the generic orbit has codimension $H_3(3)=1$
(this fact was also checked independently by a straightforward computation in Maple).

For generic $C$ we have $\|C\|^2_g\neq0$, and so we can fix the metric representative $g_0\in[g]$ by
$\|C\|^2_{g_0}=\pm1$. Then we convert $C$, using the Hodge $*$-operator of $g_0$, to the $(1,1)$ Cotton-York
tensor $C:T\to T$. Again by genericity the spectrum $\op{Sp}(C)=\{\l_1,\l_2,\l_3\}$ is simple,
and the relations $\sum\lambda_i=0$, $\max|\lambda_i|=1$ yield precisely one scalar invariant of order 3;
we can take, for instance, the polynomial invariant $\op{Tr}(C^2)$.

In addition, we have the ($g$-normalized) eigenbasis $e_1,e_2,e_3$
(that depends on the 3-jet of generic conformal structures).
This produces invariant derivations $\nabla_1,\nabla_2,\nabla_3$, as before,
and writing $g_0$ in this frame we get all 4th and higher order differential
invariants $I_i$ sufficient for Lie-Tresse generating property. These invariants will be indeed separating,
and eliminating non-algebraicity as before, we derive the fields of rational invariants $\mathfrak{F}_k$
with $\cup_k\mathfrak{F}_k=\mathfrak{F}$.


 \section{Stabilizers of generic jets}\label{S3}

Our method to compute the number of independent differential invariants of order $k$ follows the approach
of \cite{LY}. We will use the jet-language from the formal theory of PDE, and refer the reader to \cite{KL1}.

Fix a point $a\in M$. Denote by $\mathbb{D}_k$ the Lie group of $k$-jets of diffeomorphisms
preserving the point $a$. This group is obtained from $\mathbb{D}_1=\op{GL}(T)$ by successive
extensions according to the exact 3-sequence
 $$
0\to\Delta_k \longrightarrow \mathbb{D}_k\longrightarrow \mathbb{D}_{k-1}\to\{e\},
 $$
where $\Delta_k=\{[\vp]_x^k:[\vp]_x^{k-1}=[\op{id}]_x^{k-1}\}\simeq S^kT^*\otimes T$ is Abelian ($k>1$).

\smallskip

Denote $V_M=T^\text{\rm vert}_{[g]}(\mathcal{C}_M\!)$ the tangent to the fiber of $\mathcal{C}_M$.

 \begin{lem}
The following is a natural isomorphism:
$$V_M=\op{End}^\text{\rm sym}_0(T)=\{A:T\to T\,|\, g(Au,v)=g(u,Av),\op{Tr}(A)=0\}.$$
 \end{lem}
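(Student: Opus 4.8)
The claim is that the vertical tangent space $V_M$ to the conformal bundle $\mathcal{C}_M = S^2_\text{ndg}T^*M/\R_+$ is naturally isomorphic to the space of $g$-symmetric traceless endomorphisms of $T$. The plan is to compute the fiber tangent space directly and then exhibit a canonical identification that is independent of the chosen representative $g\in[g]$, so that the isomorphism is genuinely natural (conformally invariant).

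First I would unwind the quotient construction. The total space before quotienting is $S^2_\text{ndg}T^*M$, whose fiber over $a$ is an open subset (the complement of the degenerate cone) of the linear space $S^2T^*M$; hence its vertical tangent space at any point $g$ is canonically $S^2T^*_aM$ itself. Taking the quotient by the scaling $\R_+$-action identifies $g$ with $\lambda g$, so the conformal fiber is the projectivization (over positive scalars) of this open cone. The tangent space to the quotient fiber is therefore $S^2T^*_aM$ modulo the tangent to the $\R_+$-orbit through $g$, i.e.\ modulo the line spanned by the Euler direction, which is $g$ itself viewed as an element of $S^2T^*_aM$. Thus $V_M \cong S^2T^*_aM / \R\cdot g$.

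Next I would produce the stated isomorphism. Using the representative $g$ to raise an index gives a linear isomorphism $S^2T^*_aM \xrightarrow{\sim} \op{End}^\text{sym}(T)$, $h\mapsto g^{-1}h=:A$, sending $g$ itself to the identity endomorphism. Under this map the line $\R\cdot g$ goes exactly to the line $\R\cdot\op{Id}$, so the quotient becomes $\op{End}^\text{sym}(T)/\R\cdot\op{Id}$. Finally I would identify this quotient with the traceless symmetric endomorphisms $\op{End}^\text{sym}_0(T)$ by the standard splitting $A = A_0 + \tfrac{1}{n}\op{Tr}(A)\,\op{Id}$, the trace part being precisely the $\op{Id}$-line we quotiented out. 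This chain yields $V_M\cong\op{End}^\text{sym}_0(T)$.

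The main point to verify carefully, rather than the linear algebra, is \emph{naturality}: the symmetry condition $g(Au,v)=g(u,Av)$ and the tracelessness $\op{Tr}(A)=0$ must be shown to be independent of which representative $g\in[g]$ is used. Replacing $g$ by $\lambda g$ rescales the metric pairing and the inverse by inverse factors; I expect the symmetry condition to be manifestly conformally invariant (the factor $\lambda$ cancels on both sides of $g(Au,v)=g(u,Av)$), while tracelessness is a conformal weight-zero condition since $\op{Tr}$ is computed as a contraction $g^{-1}h$ and the trace part is exactly the conformal ray direction removed by the quotient. Checking this invariance is the only real obstacle; the rest is a routine identification of vertical tangent spaces of an associated bundle.
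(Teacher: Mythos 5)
Your proposal is correct and takes essentially the same approach as the paper: both identify the fiber tangent space with $S^2T^*_aM$ modulo the $\R_+$-orbit direction $\R\cdot g$, raise an index via $g^{-1}$ so that this line becomes $\R\cdot\1$, and split off the trace, with naturality following from the cancellation of the scaling factors you describe. The paper packages this as one explicit curve computation, associating to $[g+\epsilon\,\sigma]$ the endomorphism $A=g^{-1}\sigma-\tfrac1n\op{Tr}(g^{-1}\sigma)\1$ and factoring $\1+\epsilon\,g^{-1}\sigma$ into a conformal scalar times a traceless perturbation, but the content is the same as your chain of identifications.
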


 \begin{proof}
For a curve $[g+\epsilon\,\sigma]=[g(\1+\epsilon\,g^{-1}\z)]$ in $\mathcal{C}_M$ let us associate to its tangent
vector the endomorphism $A=g^{-1}\z-\frac1n\op{Tr}(g^{-1}\z)\1$. Since
 $$
\1+\epsilon g^{-1}\z=\bigl(1+\tfrac{\epsilon}n\op{Tr}(g^{-1}\z)\bigr)\1+\epsilon A
=\bigl(1+\tfrac{\epsilon}n\op{Tr}(g^{-1}\z)\bigr)\cdot\Bigl(\1+
\tfrac{\epsilon n}{n+\epsilon\!\op{Tr}(g^{-1}\z)}A\Bigr)
 $$
removal of the trace part of $\sigma$ (equivalent to conformal rescaling of the representative)
is in the kernel of this map. Since $A$ is obviously $g$-symmetric (for any representative $g$),
this map is the required isomorphism.
 \end{proof}

Thus the symbol of the bundle $J^k(\mathcal{C}_M\!)$ is
 $$
\g_k=\op{Ker}[d\pi_{k,k-1}:TJ^k(\mathcal{C}_M\!)\to TJ^{k-1}(\mathcal{C}_M\!)]=S^kT^*\otimes V_M.
 $$
The differential group $\mathbb{D}_{k+1}$ acts on $J^k_a(\mathcal{C}_M\!)$,
and hence $\Delta_{k+1}$ acts on $\g_k$. Let $a_k\in J^k_a(\mathcal{C}_M\!)$ be a generic point.
The next statement is obtained by a direct computation of the symbol of Lie derivative.

 \begin{lem}
The space $\Delta_{k+1}\cdot a_k\subset\g_k$ is the image $\op{Im}(\zeta_k)$ of the map
$\zeta_k$ that is equal to the following composition
 $$
S^{k+1}T^*\ot T\stackrel{\delta}\longrightarrow S^kT^*\otimes(T^*\ot T)
\stackrel{\1\otimes\Pi}\longrightarrow S^kT^*\ot V_M.
 $$
Here $\delta$ is the Spencer operator and $\Pi:T^*\ot T\to V_M\subset T^*\ot T$
is the projection given by
 $$
\langle p,\Pi(B)u\rangle=\tfrac12\langle p,Bu\rangle
+\tfrac12\langle u_\flat,Bp^\sharp\rangle-\tfrac1n\op{Tr}(B)\langle p,u\rangle,
 $$
where $u\in T,p\in T^*,B\in T^*\ot T$ are arbitrary,
$\langle\cdot,\cdot\rangle$ denotes the pairing between $T^*$ and $T$,
and we use the musical isomorphisms $\flat$ (flat) and $\sharp$ (sharp) that depend on the choice of
representative $g\in[g]$, but the right-hand side is independent of it.
In the index notations:
 $$
\Pi(B)^i_j=\tfrac12(B^i_j+g^{ik}B^l_kg_{lj})-\tfrac1n B^k_k\delta^i_j.
 $$
 \end{lem}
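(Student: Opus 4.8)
The plan is to compute the infinitesimal action of the abelian group $\Delta_{k+1}\simeq S^{k+1}T^*\ot T$ on the jet $a_k$ directly through the Lie derivative and to read off its linear (symbol) part. Fixing the point $a$ and working in coordinates centered at $a$, I identify an element $\phi\in S^{k+1}T^*\ot T$ of the Lie algebra of $\Delta_{k+1}$ with a vector field $X$ whose Taylor expansion at $a$ starts with the homogeneous degree-$(k+1)$ term $X^l=\phi^l_{j_0\dots j_k}x^{j_0}\cdots x^{j_k}$; the lower-order jets of $X$ are irrelevant because $\Delta_{k+1}$ is the kernel of $\mathbb D_{k+1}\to\mathbb D_k$. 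The action of $X$ on the conformal structure is the Lie derivative $\mathcal L_Xg$ taken modulo its trace part, which by Lemma 1 is exactly the projection to $V_M=\op{End}^{\text{\rm sym}}_0(T)$.

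First I would extract the pure order-$k$ part of $\mathcal L_Xg$. From $(\mathcal L_Xg)_{ij}=X^l\p_lg_{ij}+g_{lj}\p_iX^l+g_{il}\p_jX^l$, the term $X^l\p_lg_{ij}$ has Taylor degree $\ge k+1$ and so drops out at order $k$, while $g_{lj}\p_iX^l+g_{il}\p_jX^l$ contributes degree exactly $k$. The factor $\p_iX^l$ is, as a function of $x$, homogeneous of degree $k$ carrying the extra covariant index $i$, i.e.\ precisely $\delta\phi\in S^kT^*\ot(T^*\ot T)$, where $\delta$ is the Spencer operator (the irrelevant numerical factor $k+1$ is absorbed). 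This already produces the first arrow of $\zeta_k$, and it explains why the orbit direction does not depend on $a_k$: the only $a_k$-dependent contribution is $X^l\p_lg_{ij}$, which sits in higher order, so $\Delta_{k+1}$ acts on the affine fiber $\g_k$ by translations whose linear part is constant in $a_k$.

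Next I would identify the pointwise map $B=\p X\in T^*\ot T\mapsto(\text{its image in }V_M)$ with the stated $\Pi$. Writing $B^l_i=\p_iX^l$ and feeding $\z_{ij}=g_{lj}B^l_i+g_{il}B^l_j$ into Lemma 1, one computes $(g^{-1}\z)^m_j=B^m_j+g^{mk}B^l_kg_{lj}=2\cdot\tfrac12\bigl(B^m_j+g^{mk}B^l_kg_{lj}\bigr)$ and $\op{Tr}(g^{-1}\z)=2B^k_k$, whence the trace-free $g$-symmetric endomorphism associated with $\mathcal L_Xg$ equals $2\,\Pi(B)$ with $\Pi$ given by the index formula; the scalar $2$ is immaterial for the image. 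Tensoring with the identity on $S^kT^*$ gives $\zeta_k=(\1\ot\Pi)\circ\delta$ up to a nonzero scale, and therefore $\Delta_{k+1}\cdot a_k=\op{Im}(\zeta_k)$.

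Finally I would confirm the representative-independence of $\Pi$: under $g\mapsto e^{2f}g$ the middle term $\tfrac12\langle u_\flat,Bp^\sharp\rangle$ is unchanged, since $\flat$ scales by $e^{2f}$ and $\sharp$ by $e^{-2f}$, while the remaining two terms are metric-free, so the right-hand side is genuinely conformal. The main obstacle is the bookkeeping in the symbol extraction: one must argue cleanly that $\Delta_{k+1}$ acts on $\g_k$ by affine translations, so that the orbit tangent really is the image of the \emph{linear} map $\zeta_k$, and that no cross-terms between the leading part of $X$ and the higher jets of $[g]$ leak into pure order $k$. Once this structural point is secured, the remaining content is the routine index computation that matches the projection with $\Pi$.
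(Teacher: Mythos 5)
Your proposal is correct and is exactly the argument the paper has in mind: the paper dispenses with a written proof, stating only that the lemma ``is obtained by a direct computation of the symbol of Lie derivative,'' and your computation (dropping the $X^l\p_l g_{ij}$ term at pure order $k$, identifying $\p_i X^l$ with $\delta\phi$, and pushing $\z_{ij}=g_{lj}B^l_i+g_{il}B^l_j$ through the isomorphism of Lemma 1 to get $2\,\Pi(B)$) is precisely that computation carried out in detail. Your observations that the translation action makes the orbit equal to the image of the linear map, and that $\Pi$ is independent of the representative $g\in[g]$, correctly fill in the points the paper leaves implicit.
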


One should, of course, check that the image of $\Pi$ belongs to $V_M$, but this is straightforward.
Recall that $i$-th prolongation of a Lie algebra $\mathfrak{h}\subset\op{End}(T)$
is defined by the formula $\mathfrak{h}^{(i)}=S^{i+1}T^*\ot T\cap S^iT^*\ot\mathfrak{h}$.
As is well-known, for the conformal algebra of $[g]$ of any signature $(p,q)$, $n=p+q>2$, it holds:
$\mathfrak{co}(g)^{(1)}=T^*$ and $\mathfrak{co}(g)^{(i)}=0$ for $i>1$.

 \begin{lem}\label{Lzeta}
We have: $\op{Ker}(\zeta_k)=0$ for $k>1$.
 \end{lem}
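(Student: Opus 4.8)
The plan is to recognize $\op{Ker}(\zeta_k)$ as the $k$-th prolongation $\mathfrak{co}(g)^{(k)}$ of the conformal algebra and then invoke the vanishing $\mathfrak{co}(g)^{(i)}=0$ for $i>1$ recalled just above. First I would record that the Spencer operator $\delta:S^{k+1}T^*\ot T\to S^kT^*\ot(T^*\ot T)$ is injective: it is nothing but the natural inclusion $S^{k+1}T^*\hookrightarrow S^kT^*\ot T^*$ tensored with $\op{id}$ on $T$. Consequently $\op{Ker}(\zeta_k)$ consists precisely of those $\omega\in S^{k+1}T^*\ot T$ whose image $\delta\omega$ lands in $\op{Ker}(\1\ot\Pi)$.

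Next I would identify $\op{Ker}(\Pi)$. Reading the index formula $\Pi(B)^i_j=\tfrac12(B^i_j+g^{ik}B^l_kg_{lj})-\tfrac1n B^k_k\delta^i_j$, one sees that $\Pi(B)$ is the $g$-symmetric part of $B$ with its trace removed, i.e.\ $\Pi(B)=\op{sym}_g(B)-\tfrac1n\op{Tr}(B)\cdot\1$. Hence $\Pi(B)=0$ exactly when the symmetric part of $B$ is a multiple of the identity, that is $B\in\mathfrak{so}(g)\oplus\R\cdot\1=\mathfrak{co}(g)$. Therefore $\op{Ker}(\Pi)=\mathfrak{co}(g)$, and so $\op{Ker}(\1\ot\Pi)=S^kT^*\ot\mathfrak{co}(g)$ inside $S^kT^*\ot(T^*\ot T)$.

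Combining the two observations, $\op{Ker}(\zeta_k)=\{\omega\in S^{k+1}T^*\ot T:\delta\omega\in S^kT^*\ot\mathfrak{co}(g)\}$, which under the embeddings $S^{k+1}T^*\ot T\hookrightarrow S^kT^*\ot T^*\ot T\hookleftarrow S^kT^*\ot\mathfrak{co}(g)$ is exactly the intersection $S^{k+1}T^*\ot T\cap S^kT^*\ot\mathfrak{co}(g)=\mathfrak{co}(g)^{(k)}$, the $k$-th prolongation defined above. The quoted fact $\mathfrak{co}(g)^{(i)}=0$ for $i>1$ then yields $\op{Ker}(\zeta_k)=0$ for $k>1$, as claimed.

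The only genuinely delicate point is the bookkeeping in the middle step: one must check that the projection $\Pi$ entering $\zeta_k$ is precisely the projection of $\op{End}(T)=T^*\ot T$ onto the summand $V_M$ complementary to $\mathfrak{co}(g)$ in the decomposition $T^*\ot T=V_M\oplus\mathfrak{co}(g)$, so that its kernel is the full conformal algebra and not merely the skew part or the trace part alone. Once this decomposition and the matching of $\zeta_k$ with the prolongation inclusion are verified, the conclusion is immediate from the stated prolongation vanishing, with no separate computation required for each individual $k$.
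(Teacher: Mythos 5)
Your proof is correct and follows essentially the same route as the paper: identify $\op{Ker}(\1\ot\Pi)$ with $S^kT^*\ot\mathfrak{co}(g)$, so that $\op{Ker}(\zeta_k)$ is the intersection $S^{k+1}T^*\ot T\cap S^kT^*\ot\mathfrak{co}(g)$, which vanishes by the prolongation vanishing recalled before the lemma. Your bookkeeping is in fact slightly more careful than the paper's: with the paper's definition of prolongation this intersection is $\mathfrak{co}(g)^{(k)}$ (the paper writes $\mathfrak{co}(g)^{(k+1)}$, a harmless index shift), and either way it is zero for $k>1$.
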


 \begin{proof}
If $\zeta_k(\Psi)=0$, then $\delta(\Psi)\in S^kT^*\ot\mathfrak{co}(g)$, where
$\mathfrak{co}(g)\subset\op{End}(T)$ is the conformal algebra. This means that
$\Psi\in\mathfrak{co}(g)^{(k+1)}=0$, if $k>1$. Thus we conclude injectivity of $\zeta_k$.
 \end{proof}

Denote by $\op{St}_k\subset\mathbb{D}_{k+1}$ the stabilizer of a generic point
$a_k\in J^k_a(\mathcal{C}_M\!)$, and by $\op{St}^0_k$ its connected component of unity.
Then Lemma \ref{Lzeta} implies that $\Delta_{k+1}\cap\op{St}_k=\{e\}$ for $k>1$, so
the projectors $\rho_{k+1,k}:\mathbb{D}_{k+1}\to\mathbb{D}_k$ induce the injective homomorphisms
$\op{St}_k\to\op{St}_{k-1}$ and $\op{St}^0_k\to\op{St}^0_{k-1}$.

The stabilizers of low order (for any $n\ge3$) are the following. For any $a_0\in \mathcal{C}_M$
its stabilizer is $\op{St}_0=CO(g)$. Next, the stabilizer $\op{St}_1\subset\mathbb{D}_2$ of
$a_1\in J^1(\mathcal{C}_M\!)$ is the extension
(by derivations) of $\op{St}_0$ by $\mathfrak{co}(g)^{(1)}=T^*\stackrel{\iota}\hookrightarrow\Delta_2$,
where $\iota:T^*\to S^2T^*\ot T$ is given by
 $$
\iota(p)(u,v)=\langle p,u\rangle v+\langle p,v\rangle u
-\langle u_\flat,v\rangle p^\sharp,
 $$
for $p\in T^*$, $u,v\in T$,
or by using indices: $\iota(p)^j_{kl}=p_k\delta^j_l+p_l\delta^j_k-g^{ij}p_ig_{kl}$.
In other words, we have $\op{St}_1=CO(g)\ltimes T$.

Notice also that for $n=3$ due to absence of second order differential invariants
and equality $\dim\Delta_3=\dim\g_2$ we have $\op{St}^0_2=\op{St}^0_1$.
Then by dimensional reasons $\op{St}_2$ for $n\ge4$ and $\op{St}_3$ for $n=3$ are nontrivial
($\dim\mathbb{D}_3>\dim J^2_a(\mathcal{C}_M\!)-H_n(2)$ for $n>3$, resp.
$\dim\mathbb{D}_4>\dim J^3_a(\mathcal{C}_M\!)-H_3(3)$ for $n=3$).

 \begin{lem}\label{St}
If $k\ge3$, $n\ge4$ of if $k\ge4$, $n=3$, then $\op{St}^0_k=\{e\}$.
 \end{lem}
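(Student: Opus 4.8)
The plan is to exploit the injective tower $\op{St}^0_k\hookrightarrow\op{St}^0_{k-1}$ already produced from Lemma~\ref{Lzeta} (via $\Delta_{k+1}\cap\op{St}_k=\{e\}$ for $k>1$): triviality of a connected stabilizer propagates upward through these embeddings, so it suffices to settle the two base cases $\op{St}^0_3=\{e\}$ for $n\ge4$ and $\op{St}^0_4=\{e\}$ for $n=3$; every higher $k$ then follows automatically.

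For a base order $k_0$ I would first force any $\varphi\in\op{St}^0_{k_0}$ to have trivial $2$-jet, using the canonical frame field $e_1,\dots,e_n$ constructed in Section~\ref{S2}. That construction is diffeomorphism-equivariant, and its pointwise value at $a$ is read off the $2$-jet of $[g]$ for $n\ge4$ (the $3$-jet for $n=3$); hence the $m$-jet of the frame \emph{field} at $a$ is a function of the $(m{+}2)$-jet, resp.\ $(m{+}3)$-jet, of the conformal structure, and in particular its $1$-jet is determined by $a_3$, resp.\ $a_4$. Since $\varphi$ stabilizes $a_{k_0}$ it carries the canonical frame of $[g]$ to that of $\varphi^*[g]=[g]$, matching them to order $1$; the residual discrete relabelling/sign ambiguity noted in the Remark acts trivially on the connected group $\op{St}^0_{k_0}$, so $\varphi$ preserves the $1$-jet of the coframe exactly. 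A coframe being a local basis of $T^*$, the relation $\varphi^*\theta^j=\theta^j$ at order $0$ forces $d\varphi_a=\1$ and at order $1$ pins the full $2$-jet of $\varphi$ to that of the identity; thus $\varphi\in\op{Ker}(\mathbb{D}_{k_0+1}\to\mathbb{D}_2)$.

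It then remains to eliminate the top jet-layers $\Delta_3,\Delta_4$ (for $n\ge4$, $k_0=3$), resp.\ $\Delta_3,\Delta_4,\Delta_5$ (for $n=3$, $k_0=4$). I would peel these off from the bottom: if $\varphi$ has trivial $(j{-}1)$-jet, its leading term is some $\psi\in\Delta_j=S^jT^*\ot T$, and the action induced on $a_{j-1}$ is exactly $\zeta_{j-1}(\psi)$; stabilization gives $\zeta_{j-1}(\psi)=0$, whence $\psi=0$ by Lemma~\ref{Lzeta} as soon as $j-1>1$. Every index occurring here exceeds $1$ (the smallest is $\zeta_2$), so the layers vanish successively and $\varphi=\op{id}$. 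This yields $\op{St}^0_{k_0}=\{e\}$, and by the tower $\op{St}^0_k=\{e\}$ in the whole stated range.

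The $\zeta$-layer peeling is the routine part, immediate from Lemma~\ref{Lzeta}. The main obstacle is the frame step: one must check carefully that the canonical frame of Section~\ref{S2} genuinely extends to a frame \emph{field} whose $1$-jet at $a$ depends only on the $3$-jet (resp.\ $4$-jet) of $[g]$, that this assignment is equivariant and single-valued on the connected stabilizer, and that preservation of the coframe $1$-jet really trivializes the diffeomorphism's $2$-jet. The genericity hypotheses of Section~\ref{S2} (simple spectrum of the Weyl/Cotton operator) are precisely what makes the frame defined at $a_{k_0}$, and verifying that these open conditions survive the two differentiations needed for the $1$-jet of the frame field is the delicate bookkeeping.
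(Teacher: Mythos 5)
Your proof is correct and follows essentially the same route as the paper: reduce to the base orders $k_0=3$ ($n\ge4$), $k_0=4$ ($n=3$) via the injections $\op{St}^0_k\hookrightarrow\op{St}^0_{k-1}$ coming from Lemma~\ref{Lzeta}, then use the canonical frame of Section~\ref{S2}, whose value (determined by $a_2$, resp.\ $a_3$) kills the linear conformal freedom and whose 1-jet (determined by $a_3$, resp.\ $a_4$) kills the residual freedom, with connectedness absorbing the discrete sign/ordering ambiguity. The only difference is organizational: the paper works inside $\op{St}_1=CO(g)\ltimes T$ and eliminates the two factors in turn, while you trivialize the full 2-jet directly from the coframe and then peel the layers $\Delta_j$ with $\zeta_{j-1}$ --- which just re-derives the injectivity already established from Lemma~\ref{Lzeta}.
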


 \begin{proof}
In Section \ref{S2} we constructed a canonical frame $e_1,\dots,e_n$ on $T$ depending on
(generic) jet $a_i\in J^i(\mathcal{C}_M\!)$, where $i=2$ for $n>3$ and $i=3$ for $n=3$. In other words,
we constructed a frame on the bundle $\pi_i^*TM$ over a Zariski open set in $J^i(\mathcal{C}_M\!)$.

The elements from $\op{St}^0_i$ shall preserve this frame.
Since $\op{St}^0_i\subset\op{St}_1=CO(g)\ltimes T$ this eliminates the liner conformal freedom (first factor).

Choosing a point $a_{i+1}\in\pi^{-1}_{i+1,i}(a_i)\in J^{i+1}(\mathcal{C}_M\!)$
the elements of $\op{St}^0_{i+1}\subset\op{St}_1$ should also stabilize 1-jet of this frame
(realized via the canonical lift $L(a_{i+1})\subset T_{a_i}J^i(\mathcal{C}_M\!)$ of $T=T_aM$),
and this eliminates the remaining freedom $\mathfrak{co}(g)^{(1)}=T$, yielding
$\op{St}^0_{i+1}=0$ (we take the connected component because of the undetermined signs $\pm$
in the normalizations above). Hence the stabilizers $\op{St}^0_k$ for $k\ge i+1$ are trivial as well.
 \end{proof}

 \section{Hilbert polynomial and Poincar\'e function}\label{S4}

Now we can compute the number of independent differential invariants. Since $G$ acts transitively on $M$,
the codimension of the orbit of $G$ in $J^k(\mathcal{C}_M\!)$ is equal to the codimension of the
orbit of $\mathbb{D}_{k+1}$ in $J^k_a(\mathcal{C}_M\!)$ (where $a\in M$ is a fixed point). 
Denoting the orbit of $G$ through a generic point $a_k\in J^k_a(\mathcal{C}_M\!)$ 
by $\mathcal{O}_k\subset J^k_a(\mathcal{C}_M\!)$ we have:
 $$
\dim(\mathcal{O}_k)=\dim\mathbb{D}_{k+1}-\dim\op{St}_k.
 $$
Notice that
 $$
\op{codim}(\mathcal{O}_k)=\dim J^k_a(\mathcal{C}_M\!)-\dim(\mathcal{O}_k)=\op{trdeg}\mathfrak{F}_k
 $$
is the number of functionally (or, in our context, algebraically) independent scalar differential 
invariants of order $k$.

The Hilbert function is the number of ``pure order" $k$ differential invariants
$H_n(k)=\op{trdeg}\mathfrak{F}_k-\op{trdeg}\mathfrak{F}_{k-1}$.
It is known to be a polynomial (we refer to \cite{KL2} for the proof in our context),
so we will refer to it as the Hilbert polynomial.

The results of Section \ref{S3} and the formulae above allow to compute the values $H_n(k)$,
giving the table in the introduction.

 \begin{theorem}
For $n>3$ the number of ord=$2$ differential invariants is
 $$
H_n(2)=\tfrac1{12}(n^4-13n^2-12),
 $$
the number of ``pure" order $3$ differential invariants is
 $$
H_n(3)=\tfrac1{24}n(n^4+2n^3-5n^2-14n-32),
 $$
and the number of ``pure" order $k>3$ differential invariants is
 $$
H_n(k)=\frac{n(k-1)}2\binom{n+k-1}{k+1}-\binom{n+k-1}k.
 $$
For the exceptional case $n=3$ we have:
 $$
H_3(3)=1,\quad H_3(4)=9,\quad\text{ and }\quad H_3(k)=k^2-4\text{ for }k\ge5.
 $$
 \end{theorem}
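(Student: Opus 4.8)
The plan is to reduce the computation of $H_n(k)$ to a dimension count using the formula $\op{codim}(\mathcal{O}_k)=\dim J^k_a(\mathcal{C}_M\!)-\dim\mathbb{D}_{k+1}+\dim\op{St}_k$, together with the stabilizer information already established. First I would recall from Lemma \ref{St} that for $k\ge 3$ (when $n\ge4$) or $k\ge4$ (when $n=3$) the connected stabilizer $\op{St}^0_k$ is trivial, so $\dim\op{St}_k=0$ in this range. This means the orbit $\mathcal{O}_k\subset J^k_a(\mathcal{C}_M\!)$ is open (full-dimensional modulo the $\mathbb{D}_{k+1}$-action), and the codimension count becomes purely a matter of comparing two explicit dimensions: that of the jet fiber $J^k_a(\mathcal{C}_M\!)$ and that of the differential group $\mathbb{D}_{k+1}$.

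Next I would assemble the two dimension formulas. For the jet space, Lemma 3 gives the symbol $\g_k=S^kT^*\ot V_M$ with $\dim V_M=\dim\op{End}^\text{sym}_0(T)=\binom{n+1}2-1=\tfrac12(n^2+n-2)$, so $\dim J^k_a(\mathcal{C}_M\!)=\sum_{j=2}^k\dim\g_j=\dim V_M\cdot\sum_{j=2}^k\binom{n+j-1}j$ (the sum starting at $j=2$ since there are no invariants of order $<2$). For the differential group, $\dim\mathbb{D}_{k+1}=\sum_{j=1}^{k+1}\dim\Delta_j=\sum_{j=1}^{k+1}\binom{n+j-1}j\cdot n$, where $\Delta_1=\fgl(T)$ has dimension $n^2$ and $\Delta_j\simeq S^jT^*\ot T$ for $j>1$. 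The key simplification I expect is that in the injective-stabilizer range the two telescoping sums nearly cancel: since $\Delta_{k+1}\cap\op{St}_k=\{e\}$ (from Lemma \ref{Lzeta} via injectivity of $\zeta_k$), the top-order contribution to $\dim\mathbb{D}_{k+1}$ that does \emph{not} already act freely on lower jets is exactly $\dim\Delta_{k+1}$, and the pure-order count $H_n(k)=\dim\g_k-(\dim\mathbb{D}_{k+1}-\dim\mathbb{D}_k)=\dim\g_k-\dim\Delta_{k+1}$ emerges directly. This gives
 $$
H_n(k)=\dim V_M\cdot\binom{n+k-1}k-n\binom{n+k}{k+1},
 $$
and I would simplify using $\binom{n+k}{k+1}=\tfrac{n+k}{k+1}\binom{n+k-1}k$ and the identity $\binom{n+k-1}{k+1}=\tfrac{k(n-1)}{(n+k-1)(k+1)}\cdot\binom{n+k}{k}\cdots$ — more practically, rewriting both binomials with common factor $\binom{n+k-1}{k+1}$ to reach the stated closed form $\tfrac{n(k-1)}2\binom{n+k-1}{k+1}-\binom{n+k-1}k$ for $k>3$.

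For the low-order and exceptional cases I would argue separately, since there the stabilizer is nonzero and the clean cancellation fails. For $H_n(2)$ with $n>3$ I would simply invoke the $\mathcal{W}$-computation from Section \ref{S2}, where $H_n(2)=\dim\mathcal{W}-\dim CO(n)=\tfrac1{12}(n^4-13n^2-12)$. For $H_n(3)$ I would note that at order $3$ the stabilizer has just become trivial (it was nontrivial at order $2$ per the remark that $\op{St}_2$ is nontrivial for $n\ge4$), so one must use $\dim\op{St}_2$ explicitly rather than assuming it vanishes; carrying $\dim\op{St}_2=\dim\mathfrak{co}(g)^{(1)}=n$ through the codimension formula yields the stated cubic-in-disguise $\tfrac1{24}n(n^4+2n^3-5n^2-14n-32)$. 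The exceptional $n=3$ values follow identically, using that $\op{St}^0_2=\op{St}^0_1$ (no order-$2$ invariants in 3D, $\dim\Delta_3=\dim\g_2$) so the general formula applies only for $k\ge5$, with $H_3(3)=1$ coming from the Cotton-tensor orbit count and $H_3(4)$ requiring the order-$3$ stabilizer dimension. The main obstacle I anticipate is the bookkeeping at the transitional orders $k=2,3$ (and $k=3,4$ for $n=3$), where $\op{St}_k$ is neither trivial nor equal to $\op{St}_{k-1}$: there the telescoping identity must be corrected by the jump $\dim\op{St}_{k-1}-\dim\op{St}_k$, and getting this correction term exactly right — rather than the routine binomial algebra in the stable range — is what produces the irregular values $3,36,24,135$ in the introductory table.
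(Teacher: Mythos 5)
Your proposal is in substance the paper's own proof: triviality of $\op{St}^0_k$ (Lemma \ref{St}) in the stable range, the telescoped count $H_n(k)=\dim\g_k-\dim\Delta_{k+1}$, and the low-order seeds $H_n(2)$ and $H_3(3)$ imported from the Weyl/Cotton orbit computations of Section \ref{S2}. The closed forms all check out: $\dim V_M\binom{n+k-1}{k}-n\binom{n+k}{k+1}$ does simplify to $\tfrac{n(k-1)}2\binom{n+k-1}{k+1}-\binom{n+k-1}k$, and $\dim\g_3-\dim\Delta_4-n$ gives $\tfrac1{24}n(n^4+2n^3-5n^2-14n-32)$.

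Three intermediate statements are inaccurate, though none corrupts the final formulas. First, a generic orbit $\mathcal{O}_k$ is \emph{not} open -- if it were, there would be no invariants at all; what triviality of $\op{St}_k$ gives is only $\dim\mathcal{O}_k=\dim\mathbb{D}_{k+1}$, which is all you actually use. Second, your formula $\dim J^k_a(\mathcal{C}_M\!)=\sum_{j=2}^k\dim\g_j$ is wrong: the sum must start at $j=0$, so $\dim J^k_a(\mathcal{C}_M\!)=\dim V_M\cdot\binom{n+k}{k}$; the absence of invariants of order $<2$ is a statement about orbit codimensions, not about the dimension of the jet fibre. This error cancels in your computations only because every quantity you compute is a difference of consecutive orders. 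Third, and most substantively, you assert $\dim\op{St}_2=\dim\mathfrak{co}(g)^{(1)}=n$ without proof. The paper's results give only $1\le\dim\op{St}^0_2\le n$: positive dimension from the remark preceding Lemma \ref{St}, and the upper bound because $\op{St}^0_2$ preserves the canonical frame and hence lies in the translation factor of $\op{St}_1=CO(g)\ltimes T$. Equality needs an extra argument -- either the dimension count $\dim\op{St}_2=\dim\mathbb{D}_3-\dim J^2_a(\mathcal{C}_M\!)+H_n(2)=n$, which makes your route literally the paper's formula $H_n(3)=\dim J^3_a(\mathcal{C}_M\!)-\dim\mathbb{D}_4-H_n(2)$ rewritten, or the observation that the translation part of $\op{St}_1$ acts trivially on the quotient $\g_2/\op{Im}(\zeta_2)\cong\mathcal{W}$ (conformal invariance of the Weyl tensor), so that all of it lifts into $\op{St}_2$. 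The paper sidesteps this entirely by expressing $H_n(3)$ through the already-known $H_n(2)$ instead of through $\dim\op{St}_2$; with that one justification supplied (and its analogue $\dim\op{St}_3=3$ for $n=3$), your argument is complete and identical in substance to the paper's.
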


Notice that $H_n(k)\sim\tfrac{n^2-n-2}{2}\tfrac1{(n-1)!}\,k^{n-1}$, which confirms the (obvious) fact that
moduli of conformal structures are parametrized by $\binom{n}2-1$ functions of $n$ arguments.

 \begin{proof}
From Lemma \ref{St} we have: $H_n(3)=\dim J^3_a(\mathcal{C}_M\!)-\dim\mathbb{D}_4-H_n(2)$ for $n\ge4$
and $H_3(4)=\dim J^4_a(\mathcal{C}_M\!)-\dim\mathbb{D}_5-H_3(3)$ for $n=3$. These numbers are positive, and
we have by Lemma \ref{St}: $H_n(k)=\dim\g_k-\dim\Delta_{k+1}$ for $k>3$, $n\ge4$ and for $k>4$, $n=3$.
 \end{proof}

The Poincar\'e function is the generating function for the Hilbert polynomial, defined by
$P_n(z)=\sum_{k=0}^\infty H_n(k)z^k$. This is a rational function with the only pole $z=1$ of
order equal to the minimal number of invariant derivations in the Lie-Tresse generating set \cite{KL2}.

Depending on dimension $n>2$ the Poincar\'e functions are:
 \begin{gather*}
P_3(z)=\frac{z^3(1+6z-3z^2-5z^3+3z^4)}{(1-z)^3},\\
P_4(z)=\frac{z^2(3+24z-35z^2+8z^3+9z^4-4z^5)}{(1-z)^4},\\
P_5(z)=\frac{z^2(24+15z-85z^2+74z^3-10z^4-14z^5+5z^6)}{(1-z)^5}.
 \end{gather*}
 $$
P_n(z)=\frac{(n+1)nz-2(n+z)}{2z(1-z)^n}+n\Bigl(\frac1z+z-z^3\Bigr)+
\Bigl(\tbinom{n}{2}+1\Bigr)(1-z^2).
 $$

 \section{Conclusion}\label{S5}

We have described the lowest degree differential invariants: 1 invariant $I_1$ of order $k=3$ for $n=3$,
3 invariants $I_1,I_2,I_3$ of order $k=2$ for $n=4$, etc.
How to see the next invariants of order $(k+1)$?

For $n=3$ there are 9 differential invariants of ``pure" order $4$. They can be extracted from
$\nabla_j(I_1)$ ($\nabla_j=\hat{e_j}$) and the structure constants $c^k_{ij}$ given by $[e_i,e_j]=c_{ij}^ke_k$.
This gives $3+9>9$ invariants of order $4$.

For $n=4$ we can take $\nabla_j(I_i)$, structure constants $c^k_{ij}$,
but we can also add the Christoffel symbols $\Gamma_{ij}^k$ of the
Levi-Civita connection of the normalized metric $g_0$ in the basis $\{e_i\}$ as invariants of order $3$.
This gives $12+24+40>36$ invariants of order $3$.

However the amount of invariants obtained in this way exceeds the number of the independent invariants
we have found. This is because there exist algebraic relations between them (like differential Bianchi's identity).
In terms of Lie-Tresse representation of invariants, these relations are called differential syzygies.
The important problem of understanding these syzygies remains open.


\end{document}